\def\sC {\mathscr{C}}
\def\sP {\mathscr{P}}
\begin{document}

\newtheorem{theorem}{Theorem}
\newtheorem{lemma}[theorem]{Lemma}
\newtheorem{claim}[theorem]{Claim}
\newtheorem{cor}[theorem]{Corollary}
\newtheorem{prop}[theorem]{Proposition}
\newtheorem{definition}{Definition}
\newtheorem{question}[theorem]{Open Question}
\newtheorem{example}[theorem]{Example}

\numberwithin{equation}{section}
\numberwithin{theorem}{section}

 \newcommand{\F}{\mathbb{F}}
\newcommand{\K}{\mathbb{K}}
\newcommand{\D}[1]{D\(#1\)}
\def\scr{\scriptstyle}
\def\\{\cr}
\def\({\left(}
\def\){\right)}
\def\[{\left[}
\def\]{\right]}
\def\<{\langle}
\def\>{\rangle}
\def\fl#1{\left\lfloor#1\right\rfloor}
\def\rf#1{\left\lceil#1\right\rceil}
\def\le{\leqslant}
\def\ge{\geqslant}
\def\eps{\varepsilon}
\def\mand{\qquad\mbox{and}\qquad}

\def\Res{\mathrm{Res}}
\def\vec#1{\mathbf{#1}}

\def\bl#1{\begin{color}{blue}#1\end{color}} % color text blue during edits

\newcommand{\Fq}{\mathbb{F}_q}
\newcommand{\Fp}{\mathbb{F}_p}
\newcommand{\Disc}[1]{\mathrm{Disc}\(#1\)}

\newcommand{\Z}{\mathbb{Z}}
\newcommand{\C}{\mathbb{C}}
\newcommand{\Q}{\mathbb{Q}}
\renewcommand{\L}{\mathbb{L}}
%\newcommand{\Nm}[1]{\mathrm{Norm}_{\F{q^k/\Fq}}(#1)}

%%%%%%%%%%%%%%%%%%%%%%%%%
% Alphabet calligraphie %
%%%%%%%%%%%%%%%%%%%%%%%%%
\def\cA{{\mathcal A}}
\def\cB{{\mathcal B}}
\def\cC{{\mathcal C}}
\def\cD{{\mathcal D}}
\def\cE{{\mathcal E}}
\def\cF{{\mathcal F}}
\def\cG{{\mathcal G}}
\def\cH{{\mathcal H}}
\def\cI{{\mathcal I}}
\def\cJ{{\mathcal J}}
\def\cK{{\mathcal K}}
\def\cL{{\mathcal L}}
\def\cM{{\mathcal M}}
\def\cN{{\mathcal N}}
\def\cO{{\mathcal O}}
\def\cP{{\mathcal P}}
\def\cQ{{\mathcal Q}}
\def\cR{{\mathcal R}}
\def\cS{{\mathcal S}}
\def\cT{{\mathcal T}}
\def\cU{{\mathcal U}}
\def\cV{{\mathcal V}}
\def\cW{{\mathcal W}}
\def\cX{{\mathcal X}}
\def\cY{{\mathcal Y}}
\def\cZ{{\mathcal Z}}

\def \brho{\boldsymbol{\rho}}

\def \pf {\mathfrak p}

\def \Prob{{\mathrm {}}}
\def\e{\mathbf{e}}
\def\ep{{\mathbf{\,e}}_p}
\def\epp{{\mathbf{\,e}}_{p^2}}
\def\em{{\mathbf{\,e}}_m}

\newcommand{\sR}{\ensuremath{\mathscr{R}}}
\newcommand{\sDI}{\ensuremath{\mathscr{DI}}}
\newcommand{\DI}{\ensuremath{\mathrm{DI}}}

\newcommand{\Nm}[1]{\mathrm{Norm}_{\,\F_{q^k}/\Fq}(#1)}

\def\Tr{\mbox{Tr}}
\newcommand{\rad}[1]{\mathrm{rad}(#1)}

\newcommand{\Orb}[1]{\mathrm{Orb}\(#1\)}
\newcommand{\aOrb}[1]{\overline{\mathrm{Orb}}\(#1\)}

%
%\title{Iterations of polynomials in  groups}

%\title[Polynomial Dynamics Over Finite 
%Fields]{Unlikely Intersections in Polynomial Dynamics Over Finite 
%Fields}
%

\title[Orbits in Small Subgroups]
{Unlikely Intersections over Finite 
Fields: Polynomial Orbits in Small Subgroups}

  \author[L. M{\'e}rai]{L{\'a}szl{\'o} M{\'e}rai}
 \address{L.M.: Johann Radon Institute for Computational and Applied Mathematics, Austrian Academy of Sciences,  Altenberger Stra\ss e 69, A-4040 Linz, Austria} 
 \email{laszlo.merai@oeaw.ac.at}

 \author[I.~E.~Shparlinski]{Igor E. Shparlinski}
 \address{I.E.S.: School of Mathematics and Statistics, University of New South Wales.
 Sydney, NSW 2052, Australia}
 \email{igor.shparlinski@unsw.edu.au}

\pagenumbering{arabic}

\begin{abstract}
We  estimate the  frequency of polynomial iterations which fall in a given multiplicative subgroup 
of a finite field of $p$ elements. We also  give a lower bound on the size of the subgroup which is multiplicatively 
generated by the first $N$ elements in an orbit. We derive these  from more general results about 
sequences of compositions on a fixed set of polynomials. 
\end{abstract}

\keywords{polynomial iterations, polynomials semigroups, multiplicative subgroup, finite fields, unlikely intersection}

\subjclass[2010]{11D79, 11T06,   37P05,  37P25}
\maketitle

\section{Introduction}

\subsection{Background and motivation} 

Recently, several of so called  {\it unlikely intersection\/} type  results, see~\cite{Zan} for a general background, 
have been obtained on the scarcity of 
elements in orbits of polynomial maps
in fields  of characteristic zero that  fall in a set  of prescribed additive, 
multiplicative  or algebraic structure.  Examples of such  sets include \label{sec:motiv} \begin{itemize}  
\item algebraic varieties~\cite{BGKT,OstSha,SiVi,Xie1,Xie2} where the problem is also known as the
{\it dynamical Mordell-Lang conjecture\/}; 
\item an  orbit  generated by another polynomial or rational function~\cite{GTZ1,GTZ2};
\item  the set of all roots of unity in $\C$, see~\cite{DZ,Ost1} and more generally, of algebraic numbers
with all conjugates  bounded by some constant, see~\cite{Chen,OY};
\item   the set  of all perfect powers in a number field, see~\cite{CJS,OPS};
\item  a finitely generated group in a number field, see~\cite{BOSS,KLSTYZ,OstShp}. 
\end{itemize}

As we have mentioned, the above results are all established  for  fields of characteristic zero. 
There are also some analogues, 
or sometimes even stronger results
for  function  fields of positive characteristic, where some additional tools are available, such as a very strong (and rigorously established) 
form of the $abc$-conjecture (see~\cite{Mas}).

On the other hand, there are very few results in this direction in the settings of finite fields. In fact, the problem itself 
has to be reformulated and instead of asking for finiteness  results (which is immediate 
in this case) one can ask about upper bounds on the size of these intersections compared to the orbit length. 
For example, multiplicative character sums along elements of very long orbits have been estimated in~\cite{NiWi1}
(using the ideas of~\cite{NiWi2} one can apparently improve the result of~\cite{NiWi1} and make it nontrivial for slightly shorter 
orbits).  
In turn, this immediately allows to study intersections of orbits with multiplicative subgroups of finite fields. 
Some of the results of~\cite{Shp1, Shp2} apply to very short segments of an orbit,  however they are not uniform with 
respect to the polynomials $f$ defining the dynamical system, that is, the bounds depend on the size of the 
coefficients of $f$, which is not quite natural in the settings of finite fields. It is interesting to note that relatively more is 
known about {\it additive properties\/}  of orbits in finite fields, where a range of new techniques becomes available, 
see~\cite{CGOS, Chang1, Chang2, Ost2, RNS}. 

The main goal of this  work is to bridge this gap and present some results which are  both nontrivial to short segments of orbits 
and uniform with respect to the polynomial  $f$.  

Additionally we consider a more general case, when instead of one polynomial,
 finitely many polynomials are composed with each other. In other words, we consider orbits of a 
semigroup generated by a finite set of  polynomials over a finite field. 

Our technique is based on a very recent result of Vyugin~\cite{Vyu} which is based on a rather delicate blend of techniques 
and ideas coming from additive combinatorics and the so-called {\it Stepanov method\/}, which is of an algebraic nature. 

\subsection{Set-up} 
Here, motivated by the  results outline in Section~\ref{sec:motiv}, we consider analogous problems in positive characteristic and in particular 
investigate the frequency of values in an orbit of a polynomial over a finite field which fall in a  
multiplicative subgroup of a given order.  

Let $\F_q$ be the finite field of $q$ elements. Given a polynomial $f\in\F_q[X]$, we consider the trajectories generated by iterations of $f$ starting from some $u\in\F_q$, that is, sequences of the form
\begin{equation}\label{eq:u_n}
 u_0=u \mand u_{n+1}=f(u_n), \quad n=1,2,\ldots
\end{equation}

Clearly, each trajectory is eventually periodic. That is, there are some integers $0\leq s < t\leq q$ such that $u_s=u_t$ and thus $u_{s+n}=u_{t+n}$, $n=0,1,\ldots$
In particular, the smallest $T_u=t$ satisfying the above condition is called the {\it trajectory length}.

For $u\in \F_q$ let $\sP_u$ be the \textit{period length} of the sequence~\eqref{eq:u_n} with starting point $u$, that is, the minimal $k>0$ such that $u_{n}=u_{n+k}$ for $n= 0,1,\ldots$ with the convention, that $\sP_u=\infty$ if the sequence~\eqref{eq:u_n} is not periodic (just eventually periodic).

Here we consider the size $G(N)$ of the smallest subgroup $\cG\subseteq \F_q^*$ which contains the first $N$ non-zero elements of the sequence $(u_n)$, that is, 
$$
u_n\in\cG\cup \{0\}, \qquad n=1,\ldots, N.
$$

Some bounds on $G(N)$ and also on the frequency of the event $u_n \in \cG$ for a given subgroup $\cG\subseteq \F_q^*$, 
and related questions, are given in~\cite{Shp1, Shp2}.  However, the results of~\cite{Shp1} apply to either very long segments of an orbit 
or (as well as in~\cite{Shp2} are not uniform with respect to $f$ (that is, depend of the size of the coefficients of $f$). 
Here we use a different approach, utilizing a recent result of Vyugin~\cite{Vyu} and 
obtain  stronger  and fully uniform results. In fact, we study analogous problems in a much more general situation of semigroups generated by several polynomials under  composition.

Let $f_1,\ldots, f_k\in\F_q[X]$ be polynomials of positive degree. Consider the functional graph $\cH(\F_q)$
with vertices $\F_q$ and directed edges $(x,y)$ with $f_i(x)=y$, $1\leq i\leq k$.
 
For   a point $u\in\F_q$ we consider the vertices in this graph which are close to $u$:
$$
\cV_u(N)=\{f_{i_1} \circ\ldots \circ f_{i_n}(u):~ i_1,\ldots, i_n\in\{1,\ldots, k\},\  0\leq n\leq N\}
$$
and study whether these points are contained in small subgroups. Put
$$
 V_u(N)=\#\cV_u(N)
$$
and define $G_k(N)$ as the size of the smallest subgroup $\cG\subseteq \F_q^*$ which contains all non-zero elements of $\cV_u(N)$:
$$
\cV_u(N) \subseteq \cG \cup\{0\}.
$$

\subsection{Results} 
In order to state the results, for a graph $\cH(\F_q)$ we denote by $\overline{\cH}(\F_q)$  
the undirected graph (that is, $(x,y)$ is an edge in  $\overline{\cH}(\F_q)$ if $(x,y)$ or $(y,x)$ is an edge in $\cH(\F_q)$) and we also denote  by $\sC_0$ the length of a shortest cycle in  the graph $\overline{\cH}(\overline{\F}_q)$ (defined over the algebraic closure $\overline{\F}_q$ of $\overline{\F}_q$) 
which contains $0$. Clearly, for the case of one polynomial ($k=1$), we have $\sC_0=\sP_0$. 

We remark that the size (or finiteness) of  $\sC_0$   seems to   reappear in many works on unlikely intersections of orbits (in finite or infinite fields), 
see~\cite{NiWi1, OPS}, however there seems to be no intrinsic reason for this. It is highly desirable to 
gain better understanding of the phenomenon.  

Since these underlying results of~\cite{Vyu}  
apply only in prime fields, we limit our considerations to prime finite fields $\F_p$ where $p$ is a large prime. 
All explicit and implicit constants are independent of $p$.  

We now show that $G_k(N)$ grows at least quadratically compared to $V_u(N)$ as long as the field size $p$ is 
large enough to accommodate this growth, improving the trivial linear bound 
$$
G_k(N) \ge V_u(N).
$$

\begin{theorem}\label{thm:semigroup-1}
Let $\varepsilon>0$, $k\geq 1$ and $d\geq 2$ be fixed. Then there exist constants $c_1$ and $c_2$
 depending only on $d$, $k$ and  $\varepsilon$,  such that if $f_1,\ldots, f_k\in\F_p[X]$ are polynomials of degree at most $d$ and $\sC_0\geq c_1$,
 then
$$
G_k(N)\geq c_2\min\{V_u(N)^{2-\varepsilon},\,  p^{1-\varepsilon} \} \qquad \text{for } N\geq 1. 
$$
\end{theorem}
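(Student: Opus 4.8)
The plan is to reduce the statement to a counting problem about solutions of equations of the form $g(x) = \lambda h(x)$ inside a multiplicative subgroup, and then to feed this into Vyugin's theorem. Suppose $\cV_u(N) \subseteq \cG \cup \{0\}$ for a subgroup $\cG \subseteq \F_p^*$ of order $G = G_k(N)$; I want to show $G$ cannot be too small relative to $V = V_u(N)$. The first step is to organize the set of compositions into a tree-like structure: for each word $w = (i_1,\ldots,i_n)$ with $n \le N$ set $F_w = f_{i_1}\circ\cdots\circ f_{i_n}$, so that $F_w(u) \in \cG \cup \{0\}$. Grouping words by a common suffix, one sees that if $v = F_{w'}(u)$ is any vertex in $\cV_u(N)$ reached before the last few steps, then $f_{i}(v) \in \cG \cup \{0\}$ for all $i$, and iterating, a whole bounded-radius ball around $v$ lands in $\cG \cup \{0\}$. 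The key observation is that the number of \emph{distinct} vertices $V$ forces many such coincidences: since each $f_i$ has degree at most $d$, a vertex has at most $d$ preimages under each $f_i$, so the ball of radius $m$ around any vertex has at most $(kd)^m$ points; hence to realize $V$ distinct vertices one must use words of length $n \gtrsim \log V / \log(kd)$, and along any such long word one obtains $\gtrsim \log V$ consecutive vertices all lying in $\cG \cup \{0\}$.

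The second and central step is to extract from one such long word an equation to which Vyugin's bound applies. Fix a word $w = (i_1,\ldots,i_n)$ realizing a long path, and for $0 \le j \le n$ write $v_j = f_{i_{j+1}}\circ\cdots\circ f_{i_n}(u)$, so $v_0 = F_w(u)$, $v_n = u$, and $v_{j-1} = f_{i_j}(v_j)$; all the $v_j$ lie in $\cG \cup \{0\}$ (after discarding the at most one index where a zero occurs, which splits the word into two pieces, one of which still has length $\gtrsim \log V$). Now consider the polynomial map $\Phi = f_{i_1}\circ\cdots\circ f_{i_r}$ for a suitable $r$; its image contains $\gtrsim $ (something) points of $\cG$, i.e. the curve $y = \Phi(x)$ meets $\cG \times \cG$ in many points. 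This is exactly the situation governed by Vyugin's theorem~\cite{Vyu}, which bounds the number of points on an algebraic curve (here essentially $y - \Phi(x) = 0$) whose coordinates both lie in a multiplicative subgroup $\cG$ of $\F_p^*$: such a bound reads roughly as $O\!\big(\deg(\Phi)^{O(1)} G^{2/3 + o(1)} + \ldots\big)$, or in the relevant regime is polynomial in $\deg \Phi$ times $G^{1-\delta}$. Since $\deg \Phi \le d^{r}$ is bounded once $r$ is bounded, and we are free to take $r$ a large constant (this is where $c_1$, the lower bound on $\sC_0$, and the losses $\varepsilon$ enter — $\sC_0 \ge c_1$ guarantees that short cycles through $0$ do not let the path collapse, so that the $\gtrsim \log V$ vertices are genuinely many distinct points and the curve is genuinely nonlinear/irreducible in the needed sense), we obtain a bound of the shape: number of vertices on the path $\lesssim d^{O(1)} G^{1-\delta}$ for some $\delta = \delta(\varepsilon)$.

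The third step is bookkeeping. Running the previous step over a family of $\gtrsim V / (\text{ball size})$ essentially disjoint long words, or more simply summing the local estimates, yields $V \ll_{d,k,\varepsilon} G^{2-\varepsilon}$ as long as $G \le p^{1-\varepsilon}$; in the complementary range $G > p^{1-\varepsilon}$ the conclusion $G_k(N) \ge c_2 p^{1-\varepsilon}$ is immediate, which is why the $\min$ appears. Rearranging $V \ll G^{2-\varepsilon}$ gives $G \gg V^{1/(2-\varepsilon')} $; absorbing the exponent adjustment into a relabelled $\varepsilon$ gives $G_k(N) \ge c_2 \min\{V_u(N)^{2-\varepsilon}, p^{1-\varepsilon}\}$ as claimed.

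I expect the main obstacle to be the second step: verifying that the hypotheses of Vyugin's theorem are actually met, namely that the curves $y = \Phi(x)$ one produces are not of a degenerate form (e.g. $\Phi$ a monomial times a bounded-degree factor, or otherwise "multiplicatively special") for which the subgroup intersection could be as large as $G$ itself. Ruling this out is precisely where $\sC_0 \ge c_1$ should be used — a genuinely long cycle-free (in $\overline{\cH}(\overline{\F}_p)$ through $0$) stretch of the orbit prevents the relevant composition from degenerating — and making this rigorous, uniformly in $p$ and in the choice of the $f_i$, is the delicate point. A secondary technical nuisance is handling the single vertex on the path that may equal $0$ (so is not in $\cG$): one simply truncates the word there and notes one of the two halves retains length $\ge \tfrac12 \log V / \log(kd) - O(1)$, which still suffices after adjusting constants.
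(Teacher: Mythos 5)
There is a genuine and fatal gap: the proposal applies Vyugin's result to a \emph{single} curve $y=\Phi(x)$ meeting $\cG\times\cG$, which can only yield an exponent bounded away from $1/2$.  Lemma~\ref{lemma:Vyugin} for a system of $k$ admissible polynomials gives the count $\ll \#\cG^{\frac12+\frac{1}{2k}}$; a single curve corresponds to a system of two polynomials (say $g_1=X$, $g_2=\Phi$), hence at best an exponent $3/4$ (your quoted $2/3$ is no better).  That yields only $V_u(N)\ll \#\cG^{3/4}$, i.e.\ $G_k(N)\gg V_u(N)^{4/3}$, far short of $V_u(N)^{2-\varepsilon}$.  To push the exponent towards $1/2$ one must feed Vyugin a \emph{large} admissible family.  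The paper does this by observing that, for every $x\in\cV_u(N-h)$, \emph{all} $\beta=B(k,h)$ compositions $f_{i_1}\circ\cdots\circ f_{i_r}$ with $r\le h$ send $x$ into the group generated by $\cV_u(N)$; this is a single set of $\beta$ simultaneous constraints on $x$, admissible by Lemma~\ref{lemma:adm-semigroup} because $h<\sC_0/2$ (this is exactly where $\sC_0\ge c_1$ enters).  Choosing $h$ (hence $\beta$) large enough that $\frac{1}{2\beta}<\varepsilon$ gives $V_u(N-h)\ll \#\cG^{\frac12+\frac{1}{2\beta}}$, and $V_u(N)\le \beta V_u(N-h)$ finishes.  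Your ``sum over many disjoint long words'' cannot repair the missing factor: summing local bounds improves constants, not the exponent on $\#\cG$, and the number of distinct paths cannot be shown to be large by the combinatorics you describe.

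A secondary, but also real, problem is the bookkeeping: you write ``$V\ll G^{2-\varepsilon}$, rearranging gives $G\gg V^{2-\varepsilon}$'', but $V\ll G^{2-\varepsilon}$ only yields $G\gg V^{1/(2-\varepsilon)}\approx V^{1/2}$.  What the theorem needs is the reverse inequality $V\ll G^{1/2+O(\varepsilon)}$, which is precisely what forces the many-polynomial version of Vyugin described above.  Your reading of the role of $\sC_0$ (preventing degeneracy) is on the right track but should be tightened to the precise admissibility requirement (distinct roots among the compositions of depth $\le h$), which is what Lemma~\ref{lemma:adm-semigroup} supplies.
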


As a special case, we have the following result for the dynamical system~\eqref{eq:u_n} defined by just one polynomial.

\begin{cor} 
For any  $\varepsilon>0$ and $d\geq 2$ there exist constants $c_1$ and $c_2$
 depending only on $d$ and  $\varepsilon$ 
such that if $f\in\F_p[X]$ is of degree at most $d$,
$\sP_0 \geq c_1$ and $N<T_u$,
 where $T_u$ is the trajectory length of $(u_n)$,  then
$$
G(N) \geq c_2 \min\left\{N^{2-\varepsilon},\,  p^{1-\varepsilon}\right\}. 
$$
\end{cor}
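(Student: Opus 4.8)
The plan is to obtain the corollary as the specialization $k=1$ of Theorem~\ref{thm:semigroup-1}, applied not to $u$ itself but to the shifted base point $f(u)$. First I would dispatch the trivial cases. If $N=1$, the asserted inequality reads $G(1)\ge c_2$, which holds as soon as $c_2\le 1$ since every subgroup of $\F_p^*$ is non-empty. If $\deg f=0$ then $u_1=u_2$, so $T_u\le 2$, and the hypothesis $N<T_u$ again forces $N=1$. Hence from now on I may assume $N\ge 2$ and $\deg f\ge 1$, so that $f$ has positive degree and Theorem~\ref{thm:semigroup-1} applies to it.

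Next I would match the objects. Set $w=u_1=f(u)$. For the one-polynomial system with base point $w$ we have $\cV_w(N-1)=\{w,f(w),\dots,f^{N-1}(w)\}=\{u_1,u_2,\dots,u_N\}$, so the smallest subgroup $\cG\subseteq\F_p^*$ with $\cV_w(N-1)\subseteq\cG\cup\{0\}$ is exactly the group whose size is $G(N)$ in the statement; in the notation of Theorem~\ref{thm:semigroup-1} this reads $G(N)=G_1(N-1)$ for $f$ with base point $w$. Moreover, for $k=1$ we have $\sC_0=\sP_0$, as remarked in the excerpt, so the corollary's hypothesis $\sP_0\ge c_1$ coincides with the theorem's hypothesis $\sC_0\ge c_1$; here $c_1,c_2$ are the constants supplied by Theorem~\ref{thm:semigroup-1} for the given $d$, for $k=1$, and for $\varepsilon$, and since $k=1$ is fixed they depend only on $d$ and $\varepsilon$, as required.

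Finally I would evaluate $V_w(N-1)=\#\cV_w(N-1)$. By the definition of the trajectory length, $u_0,u_1,\dots,u_{T_u-1}$ are pairwise distinct; since $N<T_u$, the elements $u_1,\dots,u_N$ are pairwise distinct, so $V_w(N-1)=\#\{u_1,\dots,u_N\}=N$. Applying Theorem~\ref{thm:semigroup-1} — which is legitimate since $N-1\ge 1$ — then yields
$$
G(N)=G_1(N-1)\ \ge\ c_2\min\{V_w(N-1)^{2-\varepsilon},\,p^{1-\varepsilon}\}=c_2\min\{N^{2-\varepsilon},\,p^{1-\varepsilon}\},
$$
which is the claimed bound.

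There is no genuine obstacle in this argument: all the work is contained in Theorem~\ref{thm:semigroup-1}. The only two points that need care are (i) translating the base point from $u$ to $f(u)$, so that the generating set is $\{u_1,\dots,u_N\}$ and does not pick up $u_0$ — which a priori could lie in a much larger subgroup than $u_1,\dots,u_N$ and would ruin the bound if included — and (ii) invoking $N<T_u$ to ensure that these $N$ iterates are genuinely distinct, so that $V_w(N-1)$ equals $N$ rather than something smaller.
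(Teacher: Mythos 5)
Your proof is correct, and it is the natural specialization of Theorem~\ref{thm:semigroup-1} to $k=1$ that the paper implicitly relies on (the paper only labels the corollary a "special case" and gives no explicit argument). The one non-mechanical point is exactly the one you spotted: the paper's $G(N)$ from the introduction is defined via $u_n \in \cG \cup \{0\}$ for $n = 1, \ldots, N$, so it excludes $u_0$, whereas the semigroup quantity $G_1(N)$ for base point $u$ would be built from $\cV_u(N) = \{u_0, u_1, \ldots, u_N\}$. A blind substitution would give a lower bound on the possibly larger group $G_1(N) \ge G(N)$, which is useless. Shifting the base point to $w = f(u)$ and applying the theorem with $N-1$ in place of $N$ fixes this cleanly, and your check that $N < T_u$ makes $u_1, \ldots, u_N$ pairwise distinct so that $V_w(N-1) = N$ is exactly what is needed; note also that $\sC_0$ is a property of the functional graph alone, so it is unaffected by the base-point shift, and the identification $\sC_0 = \sP_0$ is explicit in the paper. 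The dispatch of $N=1$ and $\deg f = 0$ is fine (the theorem requires $N \ge 1$ after the shift, i.e.\ $N \ge 2$, and positive degree). In short: correct, and it fills in a small definitional seam the paper glosses over.
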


We now show that for subgroups $\cG$ of size smaller than $G_k(N)$ the frequency of elements of the 
orbit $\cV_u(N)$ which fall in $\cG$ is small.
More precisely, for a subgroup $\cG\subseteq \F_p^*$ we denote
$$
T_k(N,\cG)=\#(\cV_u(N)\cap \cG )
$$
and next give an upper bound of the frequency 
$$
\rho_k(N) = \frac{T_k(N,\cG)}{V_u(N)}.
$$

\begin{theorem}\label{thm:semigroup-2}
For any $\varepsilon>0$, $d\geq 2$ and $k\geq 1$ there exist positive constants $c_1$ and $c_2$,
depending only on $k,d$ and $\varepsilon$, such that if $f_1,\dots, f_k\in \F_p[X]$ are of degree at most $d$, $\cG$ is a subgroup of $\F_p^*$ with
 $$
 V_u(N)^{\varepsilon}<\#\cG< \min\{ V_u(N)^{2-\varepsilon},\,  p^{1-\varepsilon}\},
 $$
 then for $2 \le V_u(N)<d^{c_1 \sC_0}$  we have
 $$
 \rho_k(N)<c_2 \frac{1}{\log V_u(N)}. 
 $$
\end{theorem}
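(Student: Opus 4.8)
The plan is to derive Theorem~\ref{thm:semigroup-2} from Theorem~\ref{thm:semigroup-1} by a dyadic/pigeonhole argument applied to the composition tree, so the first step is to organize $\cV_u(N)$ into the nested sets $\cV_u(0)\subseteq \cV_u(1)\subseteq\cdots\subseteq\cV_u(N)$ and watch how $V_u(n)$ grows with $n$. The point is that the hypothesis $V_u(N)<d^{c_1\sC_0}$ (for a suitably small $c_1$) guarantees that for every $n\le N$ we still have $\sC_0$ larger than the threshold $c_1'$ required to invoke Theorem~\ref{thm:semigroup-1} at level $n$ — more precisely, since each $f_i$ has degree at most $d$, one has the crude bound $V_u(n)\le 1+k+\cdots+k^n \cdot d^{\,\text{something}}$, but the relevant control is simply that $V_u(n)\le V_u(N)<d^{c_1\sC_0}$, so all the hypotheses of Theorem~\ref{thm:semigroup-1} hold uniformly along the filtration. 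Thus for each $n$ with $V_u(n)\ge 2$ we get
$$
G_k(n)\ge c_2'\min\{V_u(n)^{2-\varepsilon'},\,p^{1-\varepsilon'}\}
$$
with $\varepsilon'$ chosen small in terms of $\varepsilon$.

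Next I would fix the subgroup $\cG$ and compare its size with $G_k(n)$. Since $\#\cG<\min\{V_u(N)^{2-\varepsilon},p^{1-\varepsilon}\}$ by hypothesis, and $G_k(n)$ grows like $V_u(n)^{2-\varepsilon'}$, there is a well-defined "critical level" $n_0$ — the largest $n$ for which $\cV_u(n)\subseteq\cG\cup\{0\}$, equivalently the largest $n$ with $G_k(n)\le\#\cG$. For $n\le n_0$ all of $\cV_u(n)$ lies in $\cG\cup\{0\}$, and from $G_k(n_0)\le\#\cG<V_u(N)^{2-\varepsilon}$ together with $G_k(n_0)\ge c_2' V_u(n_0)^{2-\varepsilon'}$ we extract an upper bound $V_u(n_0)\le \#\cG^{1/(2-\varepsilon')+o(1)}$, which combined with $\#\cG<V_u(N)^{2-\varepsilon}$ gives $V_u(n_0)\ll V_u(N)^{(2-\varepsilon)/(2-\varepsilon')}$; choosing $\varepsilon'$ so that $(2-\varepsilon)/(2-\varepsilon')<1$ yields $V_u(n_0)\le V_u(N)^{1-\delta}$ for some $\delta=\delta(\varepsilon)>0$. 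The key observation is then that every element of $\cV_u(N)$ that lies in $\cG$ and was \emph{not} already in $\cV_u(n_0)$ contributes, at the level $n$ where it first appears, to a set $\cV_u(n)$ that is \emph{not} contained in $\cG\cup\{0\}$; I would want to localize this and show the count of such elements is controlled.

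For the counting step I would partition the levels $n_0<n\le N$ into $O(\log V_u(N))$ dyadic blocks according to the size of $V_u(n)$, and in each block bound the number of orbit elements lying in $\cG$. On the block where $V_u(n)$ ranges in $[2^j,2^{j+1})$, any subgroup containing all the new elements produced in that block would have size at least $c_2'(2^j)^{2-\varepsilon'}$ by Theorem~\ref{thm:semigroup-1} applied at the top of the block, forcing $2^j\le\#\cG^{1/(2-\varepsilon')+o(1)}$; so only the $O(1)$ lowest dyadic blocks can have all their new elements inside $\cG$, and in the remaining blocks a positive proportion of new elements must fall outside $\cG$. Summing the "bad" (in-$\cG$) contributions across the $O(\log V_u(N))$ blocks — each block contributing at most a bounded multiple of its element count but the in-$\cG$ part being forced to be a vanishing fraction except on $O(1)$ blocks — gives $T_k(N,\cG)\ll V_u(n_0)+ (\#\cG)^{1/(2-\varepsilon')+o(1)}\cdot(\text{number of relevant blocks})^{-1}\cdot V_u(N)$, and after dividing by $V_u(N)$ and using $\#\cG<V_u(N)^{2-\varepsilon}$ one lands on $\rho_k(N)\ll 1/\log V_u(N)$.

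The main obstacle I anticipate is making the dyadic counting genuinely extract the $1/\log V_u(N)$ savings rather than merely a constant: Theorem~\ref{thm:semigroup-1} is a statement about the \emph{smallest} subgroup containing an entire $\cV_u(n)$, so to bound $\#(\cV_u(N)\cap\cG)$ one cannot apply it to $\cG$ directly but must apply it to the (a priori smaller) subgroup generated by $\cV_u(N)\cap\cG$ together with a careful choice of which level $n$ to test — the subtlety is that $\cV_u(N)\cap\cG$ need not be of the form $\cV_u(n)$ for any $n$. The resolution should be to run the argument along the filtration and observe that if $\rho_k(N)$ were not small then on a positive fraction of the $\sim\log V_u(N)$ dyadic levels the set $\cV_u(n)\cap\cG$ would itself be large enough that the subgroup it generates (which is contained in $\cG$) violates the quadratic lower bound of Theorem~\ref{thm:semigroup-1} at that level; quantifying "a positive fraction of levels" against "size at most $\#\cG<V_u(N)^{2-\varepsilon}$" is exactly where the logarithm enters, and getting the bookkeeping of the $\varepsilon$'s consistent there is the delicate part.
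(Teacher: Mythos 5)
Your plan is to derive Theorem~\ref{thm:semigroup-2} from Theorem~\ref{thm:semigroup-1} by a dyadic/pigeonhole argument along the filtration $\cV_u(0)\subseteq\cdots\subseteq\cV_u(N)$, but this cannot work, and the gap is precisely the one you yourself flag as the ``main obstacle'' without actually resolving it. Theorem~\ref{thm:semigroup-1} is a statement about the smallest subgroup containing \emph{all} of $\cV_u(n)$: it gives you nothing when only a (possibly large) proper subset of $\cV_u(n)$ lies in $\cG\cup\{0\}$. Your proposed resolution --- ``if $\rho_k(N)$ were not small then on a positive fraction of dyadic levels $\cV_u(n)\cap\cG$ would be large enough that the subgroup it generates violates Theorem~\ref{thm:semigroup-1}'' --- is a non-sequitur: the subgroup generated by $\cV_u(n)\cap\cG$ is contained in $\cG$, hence is small, and this is perfectly consistent with Theorem~\ref{thm:semigroup-1} because that theorem never constrains subgroups generated by subsets of $\cV_u(n)$. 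Likewise in the block argument, once a single element outside $\cG$ appears at some level, Theorem~\ref{thm:semigroup-1} can no longer be invoked at any later level, so your claim that ``in the remaining blocks a positive proportion of new elements must fall outside $\cG$'' has no justification. After the critical level $n_0$ you control nothing, and indeed it is \emph{a priori} conceivable (as far as Theorem~\ref{thm:semigroup-1} is concerned) that almost all later orbit elements fall back into $\cG$.

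The paper's proof does not go through Theorem~\ref{thm:semigroup-1} at all; Theorems~\ref{thm:semigroup-1} and~\ref{thm:semigroup-2} are proved independently from the same underlying tools. The ingredient you are missing is Lemma~\ref{lem:dense subgr}, the graph-theoretic density lemma, which is the device built precisely to handle the ``large subset'' situation that defeats your approach. Given that a positive density of $\cV_u(N)$ lies in $\cG$, Lemma~\ref{lem:dense subgr} produces $\ell$ explicit composition words $\omega_1,\dots,\omega_\ell$ of bounded length $h$ such that many vertices $v$ at distance $\le N-h$ from $u$ satisfy $F_i(v)=f_{\omega_{i,1}}\circ\cdots\circ f_{\omega_{i,r_i}}(v)\in\cG$ for all $i=1,\dots,\ell$ simultaneously. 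The hypothesis $V_u(N)<d^{c_1\sC_0}$, via Lemma~\ref{lemma:adm-semigroup}, makes these $F_i$ admissible; Vyugin's Lemma~\ref{lemma:Vyugin} then bounds the number of such $v$ from above by roughly $(\#\cG)^{1/2+1/(2\ell)}$ (up to $d^{O(h)}$ factors). Comparing this upper bound against the lower bound from Lemma~\ref{lem:dense subgr} --- with $\ell\sim 4/\varepsilon$ chosen so that $(2-\varepsilon)(\tfrac12+\tfrac1\ell)-1<0$ and $h$ chosen of size $\sim\log\#\cG$ (which is what produces the $1/\log V_u(N)$ saving in the final bound) --- yields the contradiction. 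To repair your argument you would essentially have to reinvent Lemma~\ref{lem:dense subgr}, or find another way to convert ``$\cV_u(N)\cap\cG$ is dense'' into a simultaneous polynomial-preimage statement, since nothing derivable from Theorem~\ref{thm:semigroup-1} alone can supply that.
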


As a special case, we have the following result for the dynamical system~\eqref{eq:u_n} defined by just one polynomial.

\begin{cor} 
For any $\varepsilon>0$ and $d\geq 2$ there exist positive constants $c_1$ and $c_2$,
depending only on $d$ and $\varepsilon$,   such that if $f\in \F_p[X]$ is of degree $d$, $\cG$ is a subgroup of $\F_p^*$ with
 $$
N^{\varepsilon}<\#\cG< \min\{N^{2-\varepsilon},\,  p^{1-\varepsilon}\},
 $$
 then for $2 \le N<\min\{d^{c_1 \sP_0},\, T_u\}$
 we have
 $$
 \rho(N)<c_2 \frac{1}{\log N}. 
 $$
\end{cor}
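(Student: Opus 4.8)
The final statement is a corollary of Theorem~\ref{thm:semigroup-2}, obtained by specializing to $k=1$ and a single polynomial $f$, where the key identifications are $\sC_0 = \sP_0$ (noted already in the excerpt) and the reduction of $V_u(N)$ to $N$. So the plan is straightforward: verify that the hypotheses of Theorem~\ref{thm:semigroup-2} are met and that the conclusion translates into the asserted bound on $\rho(N)$.

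First I would set $k=1$, $f_1 = f$, and observe that the semigroup generated by $f$ under composition is just the cyclic iteration semigroup, so
$$
\cV_u(N) = \{u, f(u), f^{(2)}(u), \ldots, f^{(N)}(u)\} = \{u_0, u_1, \ldots, u_N\},
$$
in the notation of~\eqref{eq:u_n}. The crucial point is that when $N < T_u$, the trajectory has not yet entered its cycle, so the points $u_0, \ldots, u_N$ are pairwise distinct; hence $V_u(N) = \#\cV_u(N) = N+1$. (One may prefer to phrase the corollary's hypotheses in terms of $N$ rather than $N+1$; the discrepancy is harmless since the constants $c_1, c_2$ are allowed to absorb the shift, and $\log(N+1) \asymp \log N$ for $N \geq 2$.) I would also record that $\rho_1(N,\cG) = T_1(N,\cG)/V_u(N)$ becomes exactly $\rho(N)$ under this identification.

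Next I would translate the hypotheses. The condition $V_u(N)^{\varepsilon} < \#\cG < \min\{V_u(N)^{2-\varepsilon}, p^{1-\varepsilon}\}$ becomes, up to adjusting $\varepsilon$ by a harmless factor (say replacing $\varepsilon$ by $\varepsilon/2$ to pass from $N$ to $N+1$ inside the exponents), the stated sandwich $N^{\varepsilon} < \#\cG < \min\{N^{2-\varepsilon}, p^{1-\varepsilon}\}$. The range condition $2 \le V_u(N) < d^{c_1 \sC_0}$ becomes $2 \le N+1 < d^{c_1 \sP_0}$, and since we additionally impose $N < T_u$ to guarantee injectivity, the combined requirement is $2 \le N < \min\{d^{c_1' \sP_0}, T_u\}$ for a slightly smaller constant $c_1'$. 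Then Theorem~\ref{thm:semigroup-2} yields $\rho(N) = \rho_1(N) < c_2 / \log V_u(N) = c_2/\log(N+1) \le c_2'/\log N$ for a suitable $c_2'$, which is the claim.

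The only genuine content beyond bookkeeping is the injectivity claim $V_u(N) = N+1$ for $N < T_u$; this is immediate from the definition of the trajectory length $T_u$ as the least $t$ with $u_s = u_t$ for some $s < t$, so no collision among $u_0, \ldots, u_N$ can occur once $N < T_u$. There is no real obstacle here — the "hard part," such as it is, is purely notational: making sure the $\varepsilon$'s and the off-by-one in $V_u(N) = N+1$ versus $N$ are absorbed cleanly into the redefined constants, and checking that the hypothesis $\deg f = d$ (rather than $\deg f \le d$) in the corollary is consistent with the application of the theorem (it is, being a special case). I would present the argument in a few lines, invoking Theorem~\ref{thm:semigroup-2} directly after the identifications above.
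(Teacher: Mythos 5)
Your proposal is correct and follows the same route the paper (implicitly) takes: specialize Theorem~\ref{thm:semigroup-2} to $k=1$, use $\sC_0=\sP_0$, and note that $N<T_u$ forces $u_0,\dots,u_N$ to be distinct so $V_u(N)=N+1$, with the off-by-one and the $\varepsilon$-shift absorbed into the constants $c_1,c_2$. The paper states the corollary without a separate proof precisely because this bookkeeping is routine, and you have identified the one substantive point (injectivity on the pre-periodic segment) correctly.
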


\section{Auxiliary results}

\subsection{Notation} 
We use the Landau symbol $O$ and the Vinogradov symbol $\ll$. Recall that the
assertions $U=O(V)$ and $U \ll V$  are both equivalent to the inequality $|U|\le cV$ with some  constant $c>0$,
which throughout this paper,  may depend on $k$ and $\varepsilon$, but is independent of  the prime 
$p$ and polynomials $f_1,\ldots, f_k\in\F_q[X]$.

For a finite  set $\cS \subseteq \Z$, it is convenient to define $\cS^\infty$ as the set of all finite 
words in the alphabet $\cS$.

\subsection{Dense sets in graphs} 

Here we present a graph theory result which can be of independent interest.

Let $\cH$ be a directed graph, with possible multiple edges. Let $\cV(\cH)$ be the set of vertices of $\cH$. For $u,v\in \cV(\cH)$, let $d(u,v)$ be the distance from $u$ to $v$, that is, the length of a shortest (directed) path from $u$ to $v$. Assume, that all the vertices have the out-degree $k\geq 1$, and the edges from all vertices are labeled by $\{1,\dots, k\}$. 

For a word $\omega\in \{1,\dots, k\}^\infty$ over the alphabet $\{1,\dots, k\}$ and $u\in \cV(\cH)$, let $\omega(u)\in \cV(\cH)$ be the end point of the walk started from $u$ and following the edges according to $\omega$.

Let us fix $u \in \cV(\cH)$ and a subset of vertices $\cA\subseteq \cV(\cH)$. Then for words $\omega_1,\dots, \omega_\ell$ put
\begin{align*}
L_N(u, \cA; \omega_1, \ldots, \omega_\ell) =\#\{v& \in \cV(\cH):~d(u,v)\leq N, \\ 
& d(u,\omega_i(v))\leq N, \ \omega_i(v) \in \cA,\ i=1,\ldots,\ell  \}.
\end{align*}

We need the following combinatorial statement which we believe can find further applications in  the
study of semigroups generated by several polynomials or other functions.

To state the results, for $k, h \geq 1$ let $B(k,h)$ denote the size of the complete $k$-tree of depth $h-1$, that is
\begin{equation}\label{eq:B}
 B(k,h)=
 \left\{
 \begin{array}{cl}
  h & \text{if } k=1,\\
  \frac{k^{h}-1}{k-1} & \text{otherwise}.
 \end{array}
 \right.
\end{equation}

\begin{lemma}
\label{lem:dense subgr} 
Let $u\in \cV(\cH)$, and $h, \ell\geq 1$ be fixed. If $\cA \subseteq \cV(\cH)$ is a subset of vertices with 
\begin{align*}
\#\{v\in \cA : ~d&(u,v)\leq N  \}\\
& \geq \max\left\{ 3B(k,h), \, \frac{3\ell}{h}\#\left\{v\in \cV(\cH): \  d(u,v)\leq N \right \} \right\},
\end{align*}
then there exist words $\omega_1, \ldots, \omega_\ell\in\{1,\dots, k\}^\infty$ of length at most $h$ such that
$$
L_N(u, \cA; \omega_1,\ldots, \omega_\ell)\gg  \frac{h}{B(k,h)^{\ell+1}}  \#\{v\in \cA: \  d(u,v)\leq N  \},
$$
where the implied constant depends only on $\ell$.
\end{lemma}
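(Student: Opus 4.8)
The plan is to construct the words $\omega_1,\dots,\omega_\ell$ one at a time, by an averaging (pigeonhole) argument over short words. Write
$$
R_N=\{v\in\cV(\cH):\ d(u,v)\le N\},\qquad A_N=\cA\cap R_N=\{v\in\cA:\ d(u,v)\le N\},
$$
so the hypothesis reads $\#A_N\ge\max\{3B(k,h),\,(3\ell/h)\#R_N\}$. Since $\#A_N\le\#R_N$, the second bound forces $h\ge 3\ell\ge 3$; in particular $\#A_N\ge 3$. The starting point is the trivial remark that the condition ``$\omega_i(v)\in A_N$'' encodes simultaneously ``$\omega_i(v)\in\cA$'' and ``$d(u,\omega_i(v))\le N$''; hence, writing $A_N^{(j)}=\{v\in R_N:\ \omega_i(v)\in A_N,\ i=1,\dots,j\}$ (with $A_N^{(0)}=R_N$), we have $L_N(u,\cA;\omega_1,\dots,\omega_j)=\#A_N^{(j)}$ for every choice of $\omega_1,\dots,\omega_j$. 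It therefore suffices to find words of length at most $h$ with $\#A_N^{(\ell)}\ge h\,\#A_N/B(k,h)^{\ell+1}$.

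\emph{The first word.} Here I would use the structure of shortest paths to produce a word of length $1$. For each $a\in A_N$ with $a\ne u$, fix a shortest path from $u$ to $a$; its last edge goes into $a$ from a vertex $w_a$ with $d(u,w_a)=d(u,a)-1<N$, so $w_a\in R_N$, and that edge carries a label $\lambda(a)\in\{1,\dots,k\}$. Pigeonholing over the $k$ labels, some label $i^*$ satisfies $\#\{a\in A_N\setminus\{u\}:\lambda(a)=i^*\}\ge(\#A_N-1)/k$. If $f_{i^*}$ denotes the map ``follow the edge labelled $i^*$'', the corresponding vertices $w_a$ are pairwise distinct (because $f_{i^*}(w_a)=a$), lie in $R_N$, and satisfy $f_{i^*}(w_a)=a\in A_N$. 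Thus the length-$1$ word $\omega_1=(i^*)$ already gives $\#A_N^{(1)}\ge(\#A_N-1)/k$.

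\emph{The remaining words.} Suppose $\omega_1=(i^*),\dots,\omega_j$ have been chosen with $1\le j<\ell$. Every $v\in A_N^{(j)}$ satisfies $\omega_1(v)\in A_N$, and $\omega_1$ is a nonempty word of length $1\le h-1$; consequently
$$
\sum_{1\le|\omega|\le h-1}\#\{v\in A_N^{(j)}:\ \omega(v)\in A_N\}
=\sum_{v\in A_N^{(j)}}\#\{\omega:\ 1\le|\omega|\le h-1,\ \omega(v)\in A_N\}\ \ge\ \#A_N^{(j)}.
$$
There are exactly $k+k^2+\dots+k^{h-1}=B(k,h)-1$ nonempty words of length at most $h-1$, so by pigeonhole some such word $\omega_{j+1}$ has $\#A_N^{(j+1)}=\#\{v\in A_N^{(j)}:\omega_{j+1}(v)\in A_N\}\ge\#A_N^{(j)}/(B(k,h)-1)$. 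Iterating from $j=1$ to $j=\ell-1$ and using $\#A_N\ge 3$,
$$
\#A_N^{(\ell)}\ \ge\ \frac{\#A_N-1}{k\,(B(k,h)-1)^{\ell-1}}\ \ge\ \frac{2\,\#A_N}{3k\,B(k,h)^{\ell-1}}.
$$
Since $h\ge 3$, the elementary inequality $B(k,h)^2\ge\frac{3}{2}kh$ holds (it follows from $B(k,h)\ge h$ together with $B(k,h)\ge\max\{1+k,\,k^{h-1}\}$), and this turns the last bound into $\#A_N^{(\ell)}\ge h\,\#A_N/B(k,h)^{\ell+1}$, which is the assertion — in fact with an absolute implied constant.

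\emph{The main point.} The only real idea is the first step: extracting a length-$1$ word from the shortest-path structure. Its value is that this single fixed short word then serves as a ``certificate'' in every later averaging step, so the pigeonhole loss at each step is only $B(k,h)-1$, rather than the (larger, $k$-dependent) total count of short words, and the cumulative loss stays $\asymp B(k,h)^{\ell}$ with a constant free of $\ell$. Everything else is bookkeeping: the two hypotheses are used only to secure $h\ge 3$ and $\#A_N\ge 3$, and they do so with much room to spare. (If one additionally wants the $\omega_i$ pairwise distinct, the certificate $\omega_1$ can no longer be reused, and one would instead have to show — exploiting $\#A_N\ge(3\ell/h)\#R_N$ more seriously — that most $v\in A_N^{(j)}$ admit several short walks into $A_N$; that is the one place where the density hypothesis would genuinely be needed.)
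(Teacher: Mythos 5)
Your argument, read literally, does prove the inequality in the lemma's statement, but it proves the wrong thing: nothing in your construction prevents $\omega_{j+1}$ from coinciding with one of the earlier words, and indeed your ``certificate'' trick positively invites taking $\omega_2=\cdots=\omega_\ell=\omega_1$. With that choice $L_N(u,\cA;\omega_1,\dots,\omega_\ell)=L_N(u,\cA;\omega_1)\ge(\#A_N-1)/k$, which already beats the claimed bound (since $B(k,h)^2\ge kh$ for $h\ge 3$), so the entire iterative pigeonhole is redundant: the ``lemma'' you have proved is a trivial one. The content the paper actually needs --- and what its own proof delivers --- is $\ell$ \emph{pairwise distinct} words of length at most $h$. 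This is used crucially in the proof of Theorem~\ref{thm:semigroup-2}: the words become compositions $F_i=f_{\omega_{i,1}}\circ\cdots\circ f_{\omega_{i,r_i}}$, and Lemma~\ref{lemma:Vyugin} is applied to $F_1,\dots,F_\ell$, which must be \emph{admissible}; if two of the $\omega_i$ coincide the corresponding $F_i$ are equal and the admissibility condition (``$g_j(x_i)\neq 0$ for $i\neq j$'') fails outright. You flag this issue yourself in your closing parenthetical, but flagging it does not fill it, and filling it is the whole point.

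The paper's proof gets distinctness for free from a different shape of argument: it double-counts pairs $(z,v)$ with $d(u,z)\le N-h$, $v\in\cA$, $d(z,v)\le h$, obtaining many vertices $z$ from which at least $\ell$ points of $\cA$ are reachable within $h$ steps; for each such $z$ one can pick $\ell$ \emph{distinct} length-$\le h$ words hitting $\cA$ (distinct targets force distinct words), and then a single pigeonhole over the at most $\binom{\beta}{\ell}$ $\ell$-subsets of short words produces one fixed tuple $(\omega_1,\dots,\omega_\ell)$ of pairwise distinct words that works for a positive proportion of those $z$. The density hypothesis $\#A_N\ge(3\ell/h)\#R_N$ is what forces the count $J$ of such ``rich'' vertices $z$ to be large, via the two-sided bound $h(\#A_N-\beta)\le P\le\beta J+(\ell-1)(\nu-J)$. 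Your proof uses that hypothesis only to conclude $h\ge 3\ell$, i.e.\ with ``much room to spare'' as you say --- and that is precisely the symptom that the real difficulty has been bypassed. To repair your approach you would need, at each step $j$, to pigeonhole only over words distinct from $\omega_1,\dots,\omega_j$ and still control the loss, which in effect amounts to reproving the paper's counting estimate; as written, your proof has a genuine gap.
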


\begin{proof}
Put
$$
\beta=B(k,h) \mand \nu=\#\{v\in \cV(\cH) : \ d(u,v)\leq N  \}.
$$

Let $P$ be the number of pairs $(z, v)\in \cV(\cH) \times \cA$ such that
$$
d(u,z)\leq N-h \quad d(z,v)\leq h.
$$

If $v\in\cA$ and $h \leq  d(u,v)\leq N$, then there is a path  from $u$ to $v$ of length at least $h$, thus there are at least $h$ vertices $z\in \cV(\cH)$ such that $d(u,z)\leq N-h$ and $d(z,v)\leq h$. As there are at most $\beta$ vertices $v$ with $d(u,v)<h$, we have
\begin{equation}\label{eq:P-lower}
P  \geq h(\#\{v\in \cA : ~d(u,v)\leq N  \} - \beta )
\end{equation}
On the other hand, let $J$ be the number of $z\in \cV(\cH)$ such that $d(u,z)\leq N-h$ and 
$$
\#\{v\in \cA: d(z,v)\leq h
\}\geq \ell.
$$ 
Then 
\begin{equation}\label{eq:P-upper}
P\leq \beta J+(\ell-1)(\nu-J ).
\end{equation}

Comparing~\eqref{eq:P-lower} and~\eqref{eq:P-upper}, we derive
\begin{align*}
J&\gg \frac{h \#\{v\in \cA : \ d(u,v)\leq N  \}  -(\ell-1) \nu- h\beta}{\beta-\ell+1}\\
&\gg  \frac{h }{\beta}\#\{v\in \cA : \ d(u,v)\leq N  \}.
\end{align*}
There are at most 
$$
\binom{\beta}{\ell} \le \beta^\ell
$$ 
choices for $\omega_1, \ldots, \omega_\ell$, so we see that there is at least one choice, that
\begin{align*}
L_N(u, \cA; \omega_1,\ldots, \omega_k)&\gg  \frac{h }{\beta\binom{\beta}{\ell}}\#\{v\in \cA : \ d(u,v)\leq N  \}\\
&\gg  \frac{h }{\beta^{\ell+1}}\#\{v\in \cA : \ d(u,v)\leq N  \}
\end{align*}
which concludes the proof. 
\end{proof}

\subsection{Polynomial values of subgroups}

We call the set of polynomials $g_1,\ldots, g_k\in\F_p[X]$ \textit{admissible} if there exist $x_1,\ldots, x_k\in \overline{\F}_p$ 
such that
$$
g_i(x_i)=0 \quad \text{but} \quad g_j(x_i)\neq 0 \quad \text{for} \quad 0\leq i,j \leq k, i\neq j.
$$

\begin{lemma}\label{lemma:adm-semigroup}
 For  polynomials $g_1,\ldots, g_{\ell}\in\F_p[X]$, if $h<\sC_0/2$, then the polynomials
 \begin{equation*}%\label{eq:set_of_adm}
 g_{i_1}\circ \ldots \circ g_{i_r},\quad r\leq h,\ i_1,\ldots, i_r\in\{1,\ldots, \ell\}
 \end{equation*}
 are admissible.
\end{lemma}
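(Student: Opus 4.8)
The plan is to unwind the definition of admissibility into a statement about the functional graph $\overline{\cH}(\overline{\F}_p)$ and then invoke the hypothesis that the shortest cycle through $0$ has length $\sC_0 > 2h$. Fix one of the composite polynomials, say $G = g_{i_1}\circ\cdots\circ g_{i_r}$ with $r\le h$. A root $x\in\overline{\F}_p$ of $G$ is exactly a point from which the walk labeled $(i_1,\dots,i_r)$ reaches $0$; equivalently, there is a directed path $x = y_0 \to y_1 \to \cdots \to y_r = 0$ of length $r\le h$ in $\cH(\overline{\F}_p)$, where $y_{m} = g_{i_m}(y_{m-1})$. So I need to produce, for each of the finitely many composite polynomials $G$ in the list, a root $x_G$ that is \emph{not} a root of any \emph{other} polynomial $G'$ in the list.

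First I would record the reformulation: $g_j(x) = 0$ for some composite $g_j$ in the family means $x$ lies within (directed) distance $\le h$ of $0$ in $\cH(\overline{\F}_p)$, and hence within distance $\le h$ of $0$ in the undirected graph $\overline{\cH}(\overline{\F}_p)$. Next I would argue by contradiction: suppose some composite $G = g_{i_1}\circ\cdots\circ g_{i_r}$ is such that \emph{every} root $x$ of $G$ is also a root of some other composite $G'$ in the list. Pick any root $x$ of $G$. Then from $x$ there are two walks into $0$: one of length $r\le h$ (witnessing $G(x)=0$) and a second one of length $r'\le h$ with a \emph{different} label word (witnessing $G'(x)=0$, and $G'\ne G$ as polynomials). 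Concatenating the first path $x\to\cdots\to 0$ with the reverse of the second path $0\to\cdots\to x$ gives a closed walk through $0$ in $\overline{\cH}(\overline{\F}_p)$ of length $r + r' \le 2h < \sC_0$. If this closed walk is not a single repeated vertex, it contains a genuine cycle through $0$ of length at most $2h < \sC_0$, contradicting the minimality of $\sC_0$. Thus the two walks must trace out the same underlying set of vertices — but then, because out-edges are labeled by distinct symbols, equality of the vertex sequences forces equality of the label words step by step, so $G' = G$, a contradiction. (One must handle the degenerate possibility that the concatenated walk collapses, e.g. $x = 0$ or the walks backtrack; in that case $0$ would itself lie on a short cycle or the two label words would already have to coincide on a common prefix, and chasing this through again forces $G'=G$.)

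Having shown that no composite $G$ has all its roots shared with other composites, I would then select, for each composite $G$ in the family, a root $x_G$ of $G$ that is not a root of any other composite in the family; assembling these $x_G$ over all composites in the list (there are at most $B(\ell,h+1)$ of them, a finite number depending only on $\ell$ and $h$) yields exactly the witnesses required by the definition of admissibility. The main obstacle I anticipate is the careful bookkeeping in the contradiction step: passing from a closed \emph{walk} of length $<\sC_0$ through $0$ to an actual \emph{cycle} of length $<\sC_0$ through $0$, and simultaneously ruling out the collapsed/backtracking configurations in a way that still forces the two label words to agree. The labeling hypothesis (each vertex has out-degree $k$ with edges labeled $1,\dots,k$) is what makes the final "vertex sequences agree $\Rightarrow$ label words agree" implication go through, and it is worth isolating that observation as the crux.
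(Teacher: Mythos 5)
Your proof takes essentially the same approach as the paper: a shared root $x$ of two distinct composites of length $\le h$ yields two distinct directed walks from $x$ to $0$, and concatenating one with the reverse of the other produces a short closed walk, hence a cycle through $0$ of length $\le 2h < \sC_0$ in $\overline{\cH}(\overline{\F}_p)$ --- a contradiction. The only real difference is cosmetic: the paper argues directly that \emph{no} root of a given composite $F$ can be a root of a different composite $G$, whereas you wrap the same core step in a ``suppose every root of $G$ is shared, pick one'' contradiction frame; you also flag (in a parenthetical, without fully resolving it) the possibility that the concatenated closed walk collapses or that the resulting cycle fails to pass through $0$, a subtlety the paper's one-line argument likewise glosses over. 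Two small slips worth noting: the walk from $x$ to $0$ realizing $g_{i_1}\circ\cdots\circ g_{i_r}(x)=0$ follows the edge labels in the order $i_r, i_{r-1}, \dots, i_1$, not $(i_1,\dots,i_r)$; and with multi-edges present (which the setup allows), equality of the vertex sequence does not by itself force equality of the label word --- one must argue with edges, not vertices, when reducing the closed walk.
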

\begin{proof}
For a given $F$ of form 
$F= g_{i_1}\circ \ldots \circ g_{i_r}$, $r\leq h$, $i_1,\ldots, i_r\in\{1,\ldots, \ell\}$, let $x\in\overline{\F}_p$ be a zero of $F$. If it is also a zero of $G$ of form $G= g_{j_1}\circ \ldots \circ g_{j_s}$, $s\leq h$, $j_1,\ldots, j_r\in\{1,\ldots, \ell\}$, $G\neq F$, then there are two different paths from $x$ to 0 in the functional graph $\cH(\overline{\F}_p)$ along the edges $i_r, \dots, i_1$ and $j_s,\dots, j_1$. As $r,s\leq h$ these two paths define a cycle of size at most $r+s\leq 2s< \sC_0$, a contradiction. 
\end{proof}
 
Our main tool is the following result of~Vyugin~\cite{Vyu}. 
 
\begin{lemma}\label{lemma:Vyugin}
 Let $\cG$ be a subgroup of $\F_p^*$   and let $\cG_1,\ldots, \cG_k$ be  $k \ge 2$ cosets of $\cG$.
 If $g_1,\ldots, g_k\in\F_p[X]$ form an admissible set of polynomials of degrees $m_1\leq\ldots \leq m_k$ and
 $$
 C_1(\mathbf{m},k)< \# \cG < C_2(\mathbf{m},k)p^{1-\frac{1}{2k+1}},
 $$
 then
 $$
 \#\{x:~g_i(x)\in \cG_i, \ i=1,\ldots, k \}\leq C_3(\mathbf{m},k) \#\cG^{\frac{1}{2}+\frac{1}{2k}},
 $$
 where for $\mathbf{m} = (m_1, \ldots, m_k)$ we define
 $$
 C_1(\mathbf{m},k)=2^{2k}m_k^{4k}, \qquad C_2(\mathbf{m},k)=(k+1)^{-\frac{2k}{2k+1}}(m_1\cdots m_k)^{-\frac{2}{2k+1}},
 $$
 and 
 $$
 C_3(\mathbf{m},k)=4(k+1)(m_1+\ldots+m_k)(m_1\cdots m_k)^{\frac{1}{k}}. 
 $$
\end{lemma}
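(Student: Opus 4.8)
The plan is to prove Lemma~\ref{lemma:Vyugin} by the Stepanov method: one constructs, via a dimension count, a nonzero auxiliary polynomial $F$ of small degree that vanishes to high order at every $x$ with $g_i(x)\in\cG_i$ for all $i$, and then ``number of roots counted with multiplicity $\le\deg F$'' yields the estimate. First I would recast the problem. Writing $t=\#\cG$, one has $y\in\cG_i$ iff $y\neq 0$ and $y^{t}=\gamma_i^{\,t}=:c_i$ for a representative $\gamma_i$ of $\cG_i$; in particular every solution $x$ satisfies $g_i(x)\neq0$, and, writing $N$ for the quantity to be bounded,
$$
N=\#\{x\in\F_p:\ g_i(x)^{t}=c_i,\ i=1,\ldots,k\}.
$$

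For the auxiliary polynomial I would use an ansatz
$$
F(X)=\sum_{\mathbf a}A_{\mathbf a}(X)\prod_{i=1}^{k}g_i(X)^{a_i},
$$
with $\mathbf a=(a_1,\ldots,a_k)$ ranging over a box $0\le a_i<r_i$ and each $A_{\mathbf a}\in\F_p[X]$ of degree $<\Delta$ with undetermined coefficients, so that there are $\gg\Delta\,r_1\cdots r_k$ unknowns. The relations $g_i(x)^{t}=c_i$ reduce any power $g_i(x)^{a}$ to a constant multiple of $g_i(x)^{a\bmod t}$; combined with the standard Hasse-derivative / $p$-th-power manoeuvre -- arranging the exponents so that the only derivatives which would disturb this reduction fall on the low-degree factors $A_{\mathbf a}$, whose derivatives one controls directly -- the demand that $F$ and its first $m-1$ Hasse derivatives vanish along the solution locus becomes a homogeneous linear system, in the unknown coefficients, of $\ll m\cdot(\text{amount of residue data tracked})$ equations, with no reference to the (unknown) solutions themselves. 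Choosing $r_i,\Delta,m$ so that the number of unknowns exceeds the number of equations produces a nonzero coefficient vector, hence a candidate $F$. Here the hypothesis $\#\cG>C_1(\mathbf m,k)$ is what makes the box of exponents large enough (and $m\ge1$), while $\#\cG<C_2(\mathbf m,k)\,p^{1-1/(2k+1)}$ is exactly what keeps $\deg F<p$, as the argument requires over $\F_p$.

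The heart of the matter is that the $F$ so produced is not identically zero, and this is where admissibility enters. Choose $x_1,\ldots,x_k\in\overline{\F}_p$ with $g_i(x_i)=0$ but $g_j(x_i)\neq0$ for $j\neq i$. Passing to the local ring at $x_i$, the factor $g_i(X)^{a_i}$ vanishes there to order proportional to $a_i$, whereas $\prod_{j\neq i}g_j(X)^{a_j}$ is a local unit and the $A_{\mathbf a}$ have bounded degree; comparing the orders of vanishing at $x_1$, then at $x_2$, and so on, one peels off the multi-index $\mathbf a$ coordinate by coordinate and forces every $A_{\mathbf a}$ to vanish, contradicting the nontriviality of the coefficient vector. (This is the same device, now over $\overline{\F}_p$, that underlies Lemma~\ref{lemma:adm-semigroup}; the only nuisance, that the $A_{\mathbf a}$ may themselves vanish at the $x_i$, is dealt with by a Wronskian-type refinement using successive derivatives.) Granting $F\not\equiv0$, one has $\deg F\le\Delta-1+\sum_i(r_i-1)m_i$ and hence $N\le(\deg F)/m$; balancing the parameters optimally -- roughly $r_i$ of order $t$ and $m$ of order $t^{1/2-1/(2k)}$, up to factors depending on the $m_i$ -- produces $N\ll(m_1+\ldots+m_k)(m_1\cdots m_k)^{1/k}\,\#\cG^{1/2+1/(2k)}$, i.e. exactly the claimed bound with the stated $C_3(\mathbf m,k)$, the factor $(m_1\cdots m_k)^{1/k}$ emerging from balancing the number of unknowns against the number of equations.

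The step I expect to be the real obstacle is, as always with the Stepanov method, proving $F\not\equiv0$: the dimension count and the degree bookkeeping are essentially mechanical once the ansatz is fixed, but the nonvanishing requires the construction to be genuinely non-degenerate, and it is precisely to secure this that the admissibility hypothesis (and, in the applications, the lower bound on $\sC_0$ via Lemma~\ref{lemma:adm-semigroup}) is imposed. Pinning down the precise shape of $C_1$, $C_2$, $C_3$ then amounts to optimizing against the three competing constraints -- enough unknowns, $\deg F<p$, and $m$ as large as possible -- and it is in the extreme regimes of this optimization that the additive-combinatorial ingredients alluded to in the introduction, and spelled out in~\cite{Vyu}, do their work.
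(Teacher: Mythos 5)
The paper contains no proof of this statement at all: Lemma~\ref{lemma:Vyugin} is imported verbatim from Vyugin's preprint~\cite{Vyu} (``Our main tool is the following result of Vyugin''), so there is no internal argument to compare your proposal against. Bearing that in mind, your sketch does point in the right direction --- Vyugin's argument is indeed Stepanov-based, the reduction of $g_i(x)\in\cG_i$ to $g_i(x)^t=c_i$ with $t=\#\cG$ is the right first move, and using the witnesses $x_1,\dots,x_k$ from admissibility to detect nonvanishing of the auxiliary polynomial is the natural way to exploit that hypothesis --- but what you have written is a program rather than a proof, and the places where it is vague are precisely the places where the argument would live or die.

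Concretely: (i) The nonvanishing of $F$, which you yourself flag as the real obstacle, is not established. ``Peel off $\mathbf a$ coordinate by coordinate by comparing orders of vanishing at $x_1,\dots,x_k$'' is a plausible outline, but you note the $A_{\mathbf a}$ may themselves vanish at the $x_i$ and defer to an unspecified ``Wronskian-type refinement''; without that refinement made explicit the argument is not complete, and it is far from automatic that it can be made to work while keeping $\Delta$ small enough for the parameter balance to close. (ii) The claim that derivative conditions reduce to ``$\ll m\cdot(\text{residue data})$'' homogeneous equations ``with no reference to the solutions themselves'' needs real work: $g_i^t-c_i$ is not constant as a polynomial, Hasse derivatives of $g_i^{a_i}$ do not respect the reduction $a_i\mapsto a_i\bmod t$, and your parenthetical about ``arranging the exponents'' is too soft to carry this. (iii) The paper itself stresses that Vyugin's result rests on ``a rather delicate blend'' of additive combinatorics and Stepanov's method; your sketch is pure Stepanov and mentions the additive-combinatorial input only in the closing sentence as something that ``does its work'' off-stage. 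That strongly suggests a missing structural ingredient, not merely missing bookkeeping, and it also means your confident assertion that the optimization ``produces \dots exactly the claimed bound with the stated $C_3(\mathbf m,k)$'' is not substantiated. In short, the proposal is a reasonable reading of what kind of proof this should be, but it does not constitute a proof, and the lemma in the paper is not proved there either --- it is a citation.
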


\section{Proof of main results}

\subsection{Proof of Theorem~\ref{thm:semigroup-1}}

Let $h \geq 1$ be the smallest integer such that
$$
\frac{1}{2B(k,h)}<\varepsilon
$$
where $B(k,h)$ is defined by~\eqref{eq:B}.  For an appropriate choice of the constant $c_1$, we can assume, that $h<\sC_0/2$.

We also set
$$
\beta = B(k,h).
$$

Let $\cG$ be the group generated by $\cV_u(N)$. Then
$$
f_{i_1}\circ \ldots \circ f_{i_r}(x)\in\cG,
$$
for 
$$
x\in \cV_u(N-h), \ i_1,\ldots, i_r\in\{1,\ldots,k\},\ 1\leq r \leq h. 
$$
Moreover, the polynomials 
$$
f_{i_1}\circ \ldots \circ f_{i_r},\quad i_1,\ldots, i_r\in\{1,\ldots,k\},\ 1\leq r \leq h, 
$$
are admissible by Lemma~\ref{lemma:adm-semigroup}.

For an appropriate choice of the constant $c_1$ we have
$$
\#\cG  > 2^{2\beta}d^{4h \beta}. 
$$
We can also  assume, that
$$
\#\cG< \((\beta+1)^{-1}d^{-1}p\) ^{2\beta/(2\beta +1)}.
$$
as otherwise there is nothing to prove.
Then by  Lemma~\ref{lemma:Vyugin}
$$
V_u(N-h)\ll \#\cG^{(\beta+1)/(2\beta +1)}.
$$
Finally, we remark that
$$
V_u(N)\leq \beta V_u(N-h)
$$
which gives the desired result.

\subsection{Proof of Theorem~\ref{thm:semigroup-2}}

We can assume throughout the proof, that both $p$ and $N$ are large enough.

Define
$$
\ell=\left\lceil \frac{4}{\varepsilon}-1 \right\rceil \mand 
h=\left\lfloor \frac{1 }{2\ell} \frac{\log \#\cG}{(\ell+1) \log k+2\log d}\right\rfloor, 
$$
Put
$$
\delta_0=\frac{1+\log k}{2\ell((\ell+1)\log k+2\log d)}.
$$
Clearly, $0<\delta_0<1/2$. By the above choice of parameters, we have
$$
B(k,h) \leq V_u(N)^{(2-\varepsilon) \delta_0}.
$$
Assume, that
\begin{equation}\label{eq:assump}
\rho_k(N)\geq  \frac{6\ell(1+\log k)}{\varepsilon\delta_0 \log V_u(N)}.
\end{equation}
Then we have
$$
T_k(N,\cG)\geq \max\left\{3B(k,h),\,  \frac{3\ell}{h}V_u(N) \right\}. 
$$

By Lemma~\ref{lem:dense subgr}, there are $\omega_1, \ldots, \omega_\ell\in\{1,\dots, k\}^\infty$ of form
$\omega_i=(\omega_{i,1},\ldots, \omega_{i,r_i})$, $r_i\leq h$, $1\leq i\leq \ell$
such that
\begin{multline}
\label{eq:lower-semigroup}
\#\{v\in \cV_u (N-h):~f_{\omega_{i,1}}\circ \ldots \circ  f_{\omega_{i,r_i}}(v)\in \cG ,\  i=1, \ldots, \ell \}
\\
\gg  \frac{h V_u(N)}{B(k,h)^{\ell+1}} \rho_k(N).
\end{multline}

Consider the polynomials $F_i=f_{\omega_{i,1}}\circ \ldots \circ f_{\omega_{i,r_i}}$ of degree $m_i=\deg F_i\leq d^h$, $i=1,\ldots, \ell$.

If $\cG$ is large enough in terms of $\ell$, then 
$$
C_1(\mathbf{m}, \ell)\leq 2^{2\ell}d^{4\ell h}\leq \# \cG.
$$
Moreover, if $\ell$ is large enough, then
\begin{align*}
C_2(\mathbf{m}, \ell) p^{\frac{2\ell}{2\ell+1}}&\geq (\ell+1)^{-\frac{2\ell}{2\ell+1}} d^{-\frac{2\ell}{2\ell+1}h}p^{\frac{2\ell}{2\ell+1}}\\
&\geq
(\ell+1)^{-\frac{2\ell}{2\ell+1}} d^{-\frac{2\ell}{2\ell+1}h}\#\cG^{\frac{1}{1-\varepsilon}\frac{2\ell}{2\ell+1}}
\geq \#\cG.
\end{align*}

By the choice of $h$,
$$
h \leq \frac{1 }{2\ell} \frac{(2-\varepsilon)\log V_u(N)}{(\ell+1) \log k+2\log d}  < \frac{1}{2} \sC_0,
$$
by an appropriate choice of $c_1$. Then the polynomials $F_1,\ldots, F_\ell$ are admissible by Lemma~\ref{lemma:adm-semigroup}. Then by Lemma~\ref{lemma:Vyugin} we have
\begin{equation}
\begin{split} 
\label{eq:upper-semigroup}
\#\{
v&\in \cV_u (N-h):~F_i(v)\in \cG ,\  i=1, \ldots, \ell \} \\ 
&\quad \leq \#\{x\in\F_p:~F_i(x)\in \cG ,\ i=1, \ldots, \ell \}\ll d^{2h} \#\cG^{\frac{1}{2}+\frac{1}{2\ell}}.
\end{split} 
\end{equation}
Comparing~\eqref{eq:lower-semigroup} and~\eqref{eq:upper-semigroup}, we have
\begin{align*}
\rho_k(N)&\ll \frac{B(k,h)^{\ell+1}d^{2h} \#\cG^{\frac{1}{2}+\frac{1}{2\ell}}}{h V_u(N)}\ll  \frac{h^\ell\#\cG^{\frac{1}{2}+\frac{1}{\ell}}}{ V_u(N)}\\
&\ll \left(\frac{(2-\varepsilon)\log V_u(N)}{4\ell \log d} \right)^\ell V_u(N)^{(2-\varepsilon)(\frac{1}{2}+\frac{1}{\ell} )-1 }.
\end{align*}
Using $\ell <  4\varepsilon^{-1} -2$ we see then the exponent of $V_u(N)$  is negative, that is
$$
(2-\varepsilon)\(\frac{1}{2}+\frac{1}{\ell}\)-1 = \frac{2}{\ell} - \frac{\varepsilon}{2} -  \frac{\varepsilon}{\ell} 
=    -  \frac{\varepsilon}{2\ell} \(\ell + 2 - 4\varepsilon^{-1}\)  <0, 
$$
which contradicts~\eqref{eq:assump}, provided that  $V_u(N)$ is large enough. 

\section*{Acknowledgement}

The research of L.M. was supported by the Austrian Science Fund (FWF): Project  P31762, and  of I.S. was  
supported in part by the Australian Research Council Grants DP170100786 and DP180100201.

\end{document}